\newtheorem{theorem}{Theorem}[section]
\newtheorem{corollary}[theorem]{Corollary}
\theoremstyle{definition}
\newtheorem{definition}[theorem]{Definition}
\theoremstyle{remark}
\numberwithin{equation}{section}
\def\a{\alpha}                  
\def\d{\delta}
                   \def\o{\omega}
             \def\L{\Lambda}
\newcommand{\hol}{{\mathcal H}}
\DeclareMathOperator{\og}{O}
\DeclareMathOperator{\ol}{o}
\def\D{{\mathbb D}}
\def\C{{\mathbb C}}  
 \def\R{{\mathbb R}}
\newcommand{\n}[1]{\Vert#1\Vert}
\theoremstyle{theorem}
\begin{document}

\title{Mean Lipschitz conditions on Bergman space }

\author{ P. Galanopoulos  }

\address{Department of Mathematics, University of Thessaloniki,
54124 Thessaloniki, Greece}

\email{petrosgala@math.auth.gr}

\author{ A. G. Siskakis}
\email{siskakis@math.auth.gr}

\author{ G. Stylogiannis }
\email{stylog@math.auth.gr}

\subjclass[2010]{Primary  30H10,  30H20, 46E15    Secondary 46E35, 30J99}

\begin{abstract}
For $f$ analytic on the unit disc let $r_t(f)(z)=f(e^{it}z)$ and $f_r(z)=f(rz)$, rotations and dilations respectively. We show that for  $f$ in the Bergman space $A^p$ and $0<\alpha\leq 1$  the following  are equivalent.
\begin{itemize}
\item[(i)] $\n{r_t(f)-f}_{A^p}=\og(|t|^{\alpha}), \quad t\to 0$,
\item[(ii)] $\n{(f')_r}_{A^p}  =\og\left (1-r)^{\alpha-1}\right ), \quad r\to 1^{-}$,
\item[(iii)] $\n{f_r-f}_{A^p}=\og((1-r)^{\alpha}),\quad  r\to 1^{-}$.
\end{itemize}

The  Hardy space analogues of these conditions are known to be equivalent by results of Hardy and Littlewood and of  E. Storozhenko, and in that   setting they   describe the
mean Lipschitz spaces $\Lambda (p, \alpha)$.

On the way,  we provide an elementary proof of the equivalence of $(ii)$ and $(iii)$ in Hardy spaces, and show that
similar  assertions are valid  for certain weighted mean Lipschitz spaces.
\end{abstract}

\maketitle

\section{Introduction}

Let $\D$ denote the unit disc in the complex plane $\C$. For $1\leq p\leq \infty$ and  $f:\D\to \C$ analytic the
integral mean $M_p(r, f)$, $0\leq r<1$,  is  defined
$$
M_p(r,f) =\left(\frac{1}{2\pi}\int_{-\pi}^{\pi}|f(re^{i\theta})|^p\,d\theta\right)^{1/p},
$$
and
$$
M_{\infty}(r,f)=\max_{-\pi\leq \theta <\pi}|f(re^{i\theta})|.
$$
$M_p(r,f)$  is  an increasing function of $r$. The Hardy space $H^p$ consists of all $f\in \hol(\D)$ for which
$$
\n{f}_p=\sup_{r<1}M_p(r,f)=\lim_{r\to 1^{-}}M_p(r,f)<\infty.
$$
 Each $f$ in $H^p$ has radial limits
$$
f^{*}(\theta)=\lim_{r\to 1^{-}}f(re^{i\theta})
$$
almost everywhere on $\theta \in [-\pi, \pi]$.  The so defined boundary function $f^{*}$ is $p$-integrable (essentially bounded if $p=\infty$) and $\n{f}_p$ can  be recovered from $f^{*}$  as
$\n{f}_p=\left(\frac{1}{2\pi}\int_{-\pi}^{\pi}|f^{*}(\theta)|^p d\theta\right)^{1/p}$.

For $1\leq p \leq \infty$, $H^p$ are Banach spaces. The linear map $f\to f^{*}$ identifies $H^p$ as the  closed subspace of $L^p(-\pi, \pi)$ generated by the set of exponentials $\{e^{in\theta}\}_{n=0}^{\infty}$ or equivalently the subspace consisting of all functions of $L^p(-\pi, \pi)$ whose Fourier series is \lq\lq of power series type\rq\rq. Additional information for Hardy spaces   can be found in  \cite{Du}, and we follow this reference for notation and related material.

For $f$ analytic on $\D$ we write
$$
r_t(f)(z) =f(e^{it}z), \quad t\in \R,
$$
for the rotated function. If $f$ has boundary values $f^{\ast}$ a.e. on
$[-\pi, \pi]$ we view $f^{\ast}$ extended periodically and we  write
$$
\tau_t(f^{\ast})(\theta)=f^{*}(\theta+t), \quad t\in \R
$$
for  the translated  $f^{*}$.  It is clear that in this case $r_t(f)^{\ast}=\tau_t(f^{\ast})$.

For $f\in H^p$, $p<\infty$,  by the continuity of the integral  we have $\lim_{t\to 0}\n{\tau_t(f^{*})-f^{*}}_p=0$. Specifying the rate of this convergence imposes restriction on $f$.

\begin{definition}For $1\leq p < \infty$ and $0<\a\leq 1$ the analytic mean Lipschitz space $\Lambda\left(p,\alpha\right)$ is the collection of $f\in H^p$ such that
\begin{equation}{\label{Lip-def}}
\n{\tau_t(f^{*})-f^{*}}_p\leq C|t|^{\a},  \quad  -\pi\leq t< \pi,
\end{equation}
where $C$ is a constant.  The subspace $\lambda\left(p, \a\right)$ consists of all $f\in H^p$ which satisfy
\begin{equation}{\label{lip-def}}
\n{\tau_t(f^{*})-f^{*}}_p =\ol(|t|^{\a}),  \quad  t\to 0.
\end{equation}
\end{definition}

Note that these spaces can be defined more generally, as subspaces of $L^p(-\pi, \pi)$, to consist of all  $L^p$ functions $f$ that satisfy  (\ref{Lip-def}) and  (\ref{lip-def}) (with $f$ in place of $f^{*}$).
 It was in this  general setting that they were first studied in the  1920's by Hardy, Littlewood and others, in connection with convergence and summability of Fourier series, fractional integrals and fractional derivatives, see \cite{HL1}, \cite{HL2}, \cite{HL3}. Among several other results Hardy and Littlewood proved the following theorem.

\vspace{0,15cm}
\textbf{Theorem A}.  \textit{Suppose $1\leq p<\infty$,  $0<\a\leq 1$ and $f\in H^p$. Then the following are
equivalent
\begin{itemize}
\item[(a)] $f\in  \Lambda\left(p,\a\right)$,
\item[(b)] $M_p(r,f^\prime) =\og\left ((1-r)^{\alpha -1}\right ), \quad r\to 1^{-}$.
\end{itemize}}

\vspace{0,15cm}
Note that if an analytic function  satisfies (b) for some $\a>0$, then it belongs in $H^p$.

For $f$ analytic on $\D$ let
$$
f_r(z)=f(rz), \quad 0\leq r<1,
$$
be  the dilations of $f$. Each $f_r$ is   analytic on a disc of radius $1/r>1$  and if $f\in H^p$ then
$$
\n{f_r-f}_p \to 0, \quad \text{as}\,\,\, r\to 1^{-},
$$
(\cite[Theorem 2.6]{Du}). In  classical terminology this states  that the Fourier series of $f^{*}$ (the power series of $f$ on the boundary) is Abel summable in the $L^p$ norm.

It seems to be not so well known that for $f\in H^p$,  membership of $f$ in  $\Lambda(p, \a)$ is equivalent to a condition on the rate at which $\n{f_r-f}_p\to 0$.

\vspace{0,15cm}
\textbf{Theorem B}(\cite{ES}). \textit{ Let $1\leq p<\infty$, $0<\a\leq 1$ and $f\in H^p$. Then the following are
equivalent
\begin{itemize}
\item[(a)] $f\in  \Lambda\left(p,\a\right)$,
\item[(c)] $\n{f_r-f}_p=\og((1-r)^{\a}),\quad r\to 1^{-}$.
\end{itemize}}

\vspace{0,15cm}
This theorem may be deduced from  the work  \cite{ES} of E. A.  Storozhenko.  In that article the author studies the classical $L^p$ modulus of continuity of order $1$ and of higher orders for Hardy functions, and introduces a modified modulus of continuity which at order $1$ coincides with the classical one. To indicate how Theorem B follows we  briefly summarize  the relevant information. Recall that the $L^p$ modulus of continuity  $\o_p(\d, f)$ (of order $1$) of a function $f\in L^p[-\pi, \pi]$ extended periodically, is
$$
\omega_p(\delta, f)=\sup_{|t|\leq \delta}\n{\tau_t(f)-f}_p.
$$
It is easy to see that    $\Lambda\left(p,\a\right)$ can  equivalently be defined as the space of all $f\in H^p$ for which $\o_p(\d, f^{*})\leq C\d^{\a}$ for all small positive $\d$. From \cite[Theorem 1 and Theorem 6]{ES}  it follows  that if $f\in H^p$ then there are constants $C_1(p), C_2(p)$ such that
$$
C_1 \o_p(1-r, f^{*})\leq \n{f_r-f}_p\leq C_2\o_p(1-r, f^{*}), \quad  r\to 1^{-},
$$
 and from this Theorem B follows.  Storozhenko  attributes  special cases
of the result to previous work of  A. I. Buadze and of R. M. Trigub, see \cite{ES} for details. Other   special cases,
 explicit or implicit, can be found in \cite{Wa},  \cite{HS}, \cite{JP},  \cite{Pa3}.

Note  that $M_p(r, f)=\n{f_r}_p$, so condition (b) of Theorem A  can be restated as
$$
\n{(f')_r}_p=\og\left ((1-r)^{\alpha -1}\right ),
$$
and the theorem of Hardy and Littlewood may  be interpreted as saying that smoothness of the boundary function $f^{*}$ can be detected from inside the disc in terms of the dilations of $f'$. Theorem B says that in effect
that dilations of $f$ itself can be used for detecting smoothness of $f^{*}$. To stress this point of view but also to have handy  the statement we  prove below,  we  formulate

\newpage
\textbf{Theorem C}. \textit{ Let $1\leq p<\infty$, $0<\a\leq 1$ and $f\in H^p$. Then the following are
equivalent
\begin{itemize}
\item[(b)] $M_p(r,f^\prime) =\og\left ((1-r)^{\alpha -1}\right ), \quad r\to 1^{-}$
\item[(c)] $\n{f_r-f}_p=\og((1-r)^{\a}), \quad r\to 1^{-}$.
\end{itemize}}

\vspace{0,15cm}
The proofs of Theorem B or of its special cases in the above mentioned articles  are  technically complicated and use  rather advanced techniques.   We are going to present an elementary proof of Theorem C below. We then study the analogous equivalence of conditions (a), (b) and (c) on weighted mean Lipschitz spaces,  on Bergman spaces and on the  Dirichlet space.

\section{An elementary proof of  Theorem C}

In what follows the letters $C, C', ...$ will denote  constants, whose value may change  at each step.

To prove  that $(b) \Rightarrow (c)$ we will use the   identity,
\begin{equation}\label{Ef1}
f(z)-f_r(z)=\int_r^1zf'(sz)\,ds,
\end{equation}
which is valid for all $f$ analytic on $\D$, all $z\in \D$ and $r<1$.

We take  p-integral means on the circle of radius $u\in (0,1)$ to obtain,
\begin{align*}
M_p(u, f_r-f) &= \left(\frac{1}{2\pi}\int_0^{2\pi}\left|\int_r^1u e^{i\theta}f'(su e^{i\theta})\,ds\right|^p\,d\theta\right)^{1/p}\\
& \leq \int_r^1\left(\frac{1}{2\pi}\int_0^{2\pi}|u e^{i\theta}f'(su e^{i\theta})|^pd\theta\right)^{1/p}\,ds\\
& =u\int_r^1 M_p(su, f')\,ds.
\end{align*}
If in addition $f\in H^p$  take the supremum on $u<1$,
\begin{equation}\label{E1}
\n{f_r-f}_p \leq \int_r^1 M_p(s, f')\,ds,
\end{equation}
and if $f$ satisfies (b) then,
$$
\n{f_r-f}_p  \leq   C\int_r^1(1-s)^{\a-1}\,ds = \frac{C}{\a}(1-r)^{\a},
$$
as desired.

For the converse suppose,  $f$ is analytic on $\D$,   $0<r<1$ and   $z\in \D$, then
\begin{align*}
z\int_r^1(f'(sz)-f'(z))\,ds &=\int_r^1\frac{d}{ds}(f(sz))\,ds - zf'(z)(1-r)\\
& = f(z)-f(rz)-zf'(z)(1-r),
\end{align*}
therefore
\begin{equation}\label{Ef2}
(1-r)f'(z) = \frac{f(z)-f_r(z)}{z}+\int_r^1(f'(z)-f'(sz))\,ds,
\end{equation}
and note that $\frac{f(z)-f_r(z)}{z}$ has removable singularity at $0$.

Taking integral means of both sides on the circle $|z|=u$ and using Minkowski's inequality we have
\begin{align*}
(1-r)M_p(u, f') & \leq \left(\frac{1}{2\pi}\int_0^{2\pi}\frac{|f(ue^{i\theta})-f_r(ue^{i\theta})|^p}{u^p}\,d\theta\right)^{1/p}\\
&+\left(\frac{1}{2\pi}\int_0^{2\pi}\left| \int_r^1 (f'(ue^{i\theta})-f'(sue^{i\theta}))\,ds\right|^p\,d\theta\right)^{1/p}\\
&\leq \frac{1}{u}M_p(u, f_r-f)+\int_r^1 \left(\frac{1}{2\pi}\int_0^{2\pi}  |f'(ue^{i\theta})-f'(sue^{i\theta})|^p\,d\theta \right)^{1/p}\,ds.
\end{align*}
Thus
\begin{equation}\label{Ef3}
(1-r)M_p(u, f') \leq\frac{1}{u} M_p(u, f_r-f) + \int_r^1M_p(u, \Phi_{[s]}')\,ds
\end{equation}
where $\Phi_{[s]}(z)=f(z)-\frac{1}{s}f_s(z)$ for $r\leq s<1$.

We estimate the two terms in the right hand side of (\ref{Ef3}).
The quantity  $\frac{1}{u}M_p(u, f_r-f)$ is increasing in $u$ since it is the  integral mean of
an analytic function. Assuming  that $f\in H^p$ we have for each $0< u<1$,
\begin{equation}\label{Ef4}
\frac{1}{u}M_p(u, f_r-f)\leq \sup_{u<1}\frac{1}{u}M_p(u, f_r-f)=\n{f_r-f}_p.
\end{equation}
Next if  $f\in H^p$ then $\Phi_{[s]}\in H^p$ and
\begin{align*}
\n{\Phi_{[s]}}_p &= \n{f-\frac{1}{s}f_s}_p
 \leq \n{f-f_s}+\n{f_s-\frac{1}{s}f_s}_p\\
& =\n{f_s-f}_p+\frac{1-s}{s}\n{f_s}_p\\
&\leq \frac{1}{r}(1-s)\n{f}_p+\n{f_s-f}_p
\end{align*}
for $r\leq s<1$.
Further we use the  well known estimate
$$
M_p(u, F')\leq \frac{C\n{F}_p}{1-u}, \quad 0\leq u<1,
$$
for functions $F\in H^p$, with  $C$ independent of $F$.  For a  proof of this one can use  Cauchy's integral  formula for $F'(z)$ as in the proof of the first  part of \cite[Theorem 5.5]{Du}.

Applying this inequality to  $\Phi_{[s]}$ together with the  inequality for  $\n{\Phi_{[s]}}_p$  we have
\begin{align*}
(1-u)M_p(u, \Phi_{[s]}')\leq \n{\Phi_{[s]}}_p\leq \frac{C}{r}(1-s)\n{f}_p+C\n{f_s-f}_p,
\end{align*}
valid for $0\leq u<1$ and $r\leq s<1$. In particular letting $u=r$ and integrating we obtain
$$
\int_r^1M_p(r, \Phi_{[s]}')\,ds\leq \frac{C\n{f}_p}{2r}(1-r)+ \frac{C}{1-r}\int_r^1\n{f_s-f}_p\,ds.
$$
Using this  and letting $u=r$ in (\ref{Ef3}), together with  (\ref{Ef4})  we find
\begin{equation}\label{E2}
\begin{aligned}
(1-r)M_p(r, f')&\leq \n{f_r-f}_p+\frac{C\n{f}_p}{2r}(1-r)\\
&+ \frac{C}{1-r}\int_r^1\n{f_s-f}_p\,ds.
\end{aligned}
\end{equation}
 Thus if $f$ satisfies  $\n{f_r-f}_p\leq C'(1-r)^{\a}$ then we have
$$
(1-r)M_p(r, f')\leq C' (1-r)^{\a}+\frac{C\n{f}_p}{2r}(1-r)+\frac{C C'}{\a+1}(1-r)^{\a}
$$
therefore
$$
M_p(r, f')=\og((1-r)^{\a-1}), \quad r\to 1^{-},
$$
valid for all $0<\a\leq 1$. This finishes the proof of Theorem  C.\\

It is well known that the \lq\lq little oh\rq\rq analogue of the theorem of Hardy and Littlewood is valid for membership in the spaces $\lambda(p, \a)$. That is, $f\in \lambda(p, \a)$ if and only if
$M_p(r,f^\prime) =\ol\left ((1-r)^{\alpha -1}\right )$, as $r\to 1^{-}$. As expected the \lq\lq little oh\rq\rq analogue  of Theorem C also holds.  The  proof follows easily from the inequalities (\ref{E1}) and (\ref{E2}). We omit the details.

\begin{corollary}\label{cor1}
Let $1\leq p<\infty$, $0<\a\leq 1$ and $f\in H^p$. Then the following are
equivalent
\begin{itemize}
\item[(b)] $M_p(r,f^\prime) =\ol\left ((1-r)^{\alpha -1}\right ), \quad r\to 1^{-}$,
\item[(c)] $\n{f_r-f}_p=\ol((1-r)^{\a}), \quad r\to 1^{-}$.
\end{itemize}
\end{corollary}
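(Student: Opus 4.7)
The plan is to run the proof of Theorem~C essentially line by line, replacing each $O$-estimate there by a little-$o$ estimate through the standard device of writing the hypothesis in the $\varepsilon$-form \lq\lq for every $\varepsilon>0$ there exists $r_0<1$ with~...\rq\rq. No new tools beyond the inequalities (\ref{E1}) and (\ref{E2}) should be needed.

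For the direction $(b)\Rightarrow(c)$, given $\varepsilon>0$ I would choose $r_0<1$ with $M_p(s,f')\leq \varepsilon(1-s)^{\alpha-1}$ for all $s\in [r_0,1)$ and substitute into (\ref{E1}). The same elementary integration as in Theorem~C then yields
\[
\n{f_r-f}_p\leq \int_r^1 M_p(s,f')\,ds\leq \frac{\varepsilon}{\alpha}(1-r)^{\alpha}
\]
for $r\in [r_0,1)$, and since $\varepsilon$ is arbitrary this gives $\n{f_r-f}_p=o((1-r)^{\alpha})$.

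For the direction $(c)\Rightarrow(b)$, fix $\varepsilon>0$ and pick $r_0<1$ with $\n{f_s-f}_p\leq \varepsilon(1-s)^{\alpha}$ for $s\in [r_0,1)$. Substituting into (\ref{E2}) and dividing by $(1-r)^{\alpha}$ gives
\[
(1-r)^{1-\alpha}M_p(r,f')\leq \varepsilon\left(1+\frac{C}{\alpha+1}\right)+\frac{C\n{f}_p}{2r}(1-r)^{1-\alpha}.
\]
When $\alpha<1$ the last summand tends to $0$ as $r\to 1^-$, so taking $\limsup$ and then letting $\varepsilon\to 0$ produces the desired $M_p(r,f')=o((1-r)^{\alpha-1})$.

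The main obstacle I foresee is the borderline case $\alpha=1$: the term $\frac{C\n{f}_p}{2r}(1-r)$ on the right-hand side of (\ref{E2}) is then of the same order as the target $(1-r)^{\alpha}$ and does not vanish on dividing. I would dispose of this case separately by noting that the displayed inequality above (with $\alpha=1$) already forces $M_p(r,f')=O(1)$, so $f'\in H^p$; then the identity $f(z)-f_r(z)=\int_r^1 zf'(sz)\,ds$ of (\ref{Ef1}) together with the $L^p$-continuity of the dilation action on $f'$ yields $\n{f_r-f}_p/(1-r)\to \n{f'}_p$ as $r\to 1^-$, and the hypothesis $\n{f_r-f}_p=o(1-r)$ then forces $f'\equiv 0$, making (b) trivial.
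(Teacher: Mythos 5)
Your proposal is correct and follows exactly the route the paper indicates (the paper omits the details, saying only that the corollary "follows easily from the inequalities (\ref{E1}) and (\ref{E2})"). The $\varepsilon$-form substitutions into (\ref{E1}) and (\ref{E2}) work as you describe for $(b)\Rightarrow(c)$ (all $\a\le 1$) and for $(c)\Rightarrow(b)$ when $\a<1$. Your separate treatment of $\a=1$ is the one genuinely non-obvious point, and it is sound: the bound $M_p(r,f')=\og(1)$ together with monotonicity of the means gives $f'\in H^p$, and then writing $\frac{f-f_r}{1-r}-zf'(z)=\frac{1}{1-r}\int_r^1 z\bigl(f'(sz)-f'(z)\bigr)\,ds$ and applying Minkowski's integral inequality gives $\bigl\|\frac{f-f_r}{1-r}-zf'\bigr\|_p\leq \frac{1}{1-r}\int_r^1\|(f')_s-f'\|_p\,ds\to 0$ by the $L^p$-continuity of dilations, so $\|f_r-f\|_p/(1-r)\to\|f'\|_p$ and the hypothesis forces $f'\equiv 0$, making (b) trivial. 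This is a detail the paper's "we omit the details" glosses over, and it is worth recording.
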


\section{Weighted mean Lipschitz  Spaces}

We  show that the analogue of Theorem C remains valid in  weighted Lipschitz spaces which are defined as follows.

Let $\o:[0, 1)\to [0, \infty)$
be a continuous and nondecreasing  function with $\o(0)=0$.
The weighted mean Lipschitz space
$\Lambda(p, \o)$ consists of all $f\in H^p$ such that
\begin{equation}\label{Lip-w}
\n{\tau_t(f^{*})-f^{*}}_p=\og\left(\o(t)\right), \quad t\to 0.
\end{equation}
If  $\o(t)=t^{\a}$ then we recover $\L(p,\a)$. For general weights $\o$ these spaces  have been studied by various authors, see for example \cite{BS}, \cite{Gi1}, \cite{GiG} and the references therein.

An extension of the theorem of Hardy and Littlewood was proved for  these generalized Lipschitz spaces for an appropriate class of weights $\o$.
 A weight $\o$ is called a   \textit{Dini weight} if  $\o(t)/t$ is integrable on $[0,1)$ and there is a constant $C$ such that the  following condition is satisfied
\begin{equation}\label{Dini}
\int_0^t\frac{\o(s)}{s}\,ds\leq C\o(t), \quad 0<t<1.
\end{equation}
A weight $\o$ is an \textit{admissible weight} if it is a Dini weight and satisfies in addition  the    condition
\begin{equation}\label{Dini+b}
\int_t^1\frac{\o(s)}{s^2}\,ds\leq C\frac{\o(t)}{t}, \quad 0<t<1,
\end{equation}
for a constant $C$. Note that if $\o$  satisfies this last condition then there is a constant $C>0$ such that
\begin{equation}\label{Dini+bp}
\frac{\o(t)}{t}\geq C, \quad 0<t<1.
\end{equation}

For admissible weights  O. Blasco and G. S. de Souza proved in
 \cite[Theorem 2.1]{BS}  the analogue of the theorem of Hardy and Littlewood.

\vspace{0,15cm}
\textbf{Theorem D} (\cite{BS}) \textit{Suppose  $1\leq p < \infty$, $\o$ is an admissible weight and $f$ is analytic on $\D$.  Then the following are equivalent
\begin{itemize}
\item[(a)] $f\in \Lambda\left(p,\o\right)$,
\item[(b)] $M_p(r,f^\prime) =\og\left (\frac{\o(1-r)}{1-r}\right ), \quad r\to 1^{-}$.
\end{itemize}}

\vspace{0,15cm}

We show that the analogue of Theorem C is valid in $\Lambda\left(p,\o\right)$.

\begin{theorem} Suppose $1\leq p<\infty$, $\o$ is a Dini weight that satisfies  (\ref{Dini+bp})  and $f\in H^p$. Then the following are equivalent
\begin{itemize}
\item[(b)] $M_p(r,f^\prime) =\og\left (\frac{\o(1-r)}{1-r}\right ), \quad r\to 1^{-}$,
\item[(c)]  $\n{f_r-f}_p=\og(\omega(1-r)), \quad r\to 1^{-}$.
\end{itemize}
In particular the conditions (a), (b) and (c) as above are equivalent for admissible weights.
\end{theorem}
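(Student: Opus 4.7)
The plan is to adapt the proof of Theorem C to the weighted setting, reusing the inequalities (\ref{E1}) and (\ref{E2}) from Section 2 essentially verbatim. Both of these inequalities hold for any $f\in H^p$ with no reference to a Lipschitz exponent, so they transfer without change; what must be modified is the integral estimate that converts the hypotheses on $M_p(s,f')$ and $\n{f_s-f}_p$ into the required bound. The Dini condition (\ref{Dini}) and the nondecreasing nature of $\o$ will handle the two integrals that appear.

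For the implication $(b)\Rightarrow (c)$, I would start from (\ref{E1}) and insert the assumption $M_p(s,f')\leq C\,\o(1-s)/(1-s)$. After the substitution $u=1-s$ the right hand side becomes
$$
\int_0^{1-r}\frac{\o(u)}{u}\,du,
$$
which is at most $C\,\o(1-r)$ by the Dini condition (\ref{Dini}). This gives (c).

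For $(c)\Rightarrow (b)$, I would apply (\ref{E2}) directly. Under the hypothesis $\n{f_s-f}_p\leq C\,\o(1-s)$ the first term on the right of (\ref{E2}) is already $\og(\o(1-r))$. The last term is $\frac{C}{1-r}\int_r^1\o(1-s)\,ds$, and since $\o$ is nondecreasing the integral is bounded by $(1-r)\o(1-r)$, so this term is $\og(\o(1-r))$ as well. The only subtle point is the middle term $\frac{C\n{f}_p}{2r}(1-r)$: here condition (\ref{Dini+bp}), which forces $\o(t)\geq Ct$, lets me absorb $(1-r)$ into $\o(1-r)$. Dividing by $(1-r)$ then yields $M_p(r,f')=\og(\o(1-r)/(1-r))$.

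The main (and only) delicate point is clarifying why condition (\ref{Dini+bp}) appears in the hypothesis: without a lower bound on $\o(t)/t$ the innocuous-looking term $\frac{C\n{f}_p}{2r}(1-r)$ in (\ref{E2}) would not be controllable by $\o(1-r)$ and the argument $(c)\Rightarrow(b)$ would break down. The ``in particular'' statement is then immediate: for an admissible weight $\o$ is Dini and satisfies (\ref{Dini+b}), hence a fortiori (\ref{Dini+bp}), so the theorem applies, and combining with Theorem D gives the full equivalence of (a), (b) and (c).
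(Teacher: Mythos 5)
Your proposal is correct and follows essentially the same route as the paper: both directions reuse the inequalities (\ref{E1}) and (\ref{E2}), the Dini condition (\ref{Dini}) handles the integral $\int_0^{1-r}\o(u)/u\,du$ in the direction (b) $\Rightarrow$ (c), and (\ref{Dini+bp}) absorbs the term $\frac{C\n{f}_p}{2r}(1-r)$ exactly as you indicate. The only (harmless) deviation is in bounding $\frac{1}{1-r}\int_r^1\n{f_s-f}_p\,ds$: you use monotonicity of $\o$ to get $\int_r^1\o(1-s)\,ds\leq(1-r)\o(1-r)$ directly, whereas the paper first passes to $\int_r^1\frac{\o(1-s)}{1-s}\,ds$ and invokes the Dini condition a second time; both are valid.
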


\begin{proof}
Suppose  (b) holds for $f\in H^p$. Using (\ref{E1}) we  have
\begin{align*}
\n{f_r-f}_p &\leq  C\int_r^1\frac{\o(1-s)}{1-s}\,ds\\
&=C\int_0^{1-r}\frac{\o(u)}{u}\,du\\
&\leq CC'\o(1-r)
\end{align*}
as desired, where in the last step we have used the Dini property of $\o$.

Suppose now (c) holds for $f\in H^p$. Then the last term in the right hand side of (\ref{E2}) becomes
\begin{align*}
\frac{C}{1-r}\int_r^1\n{f_s-f}_p\,ds&\leq  \frac{C'}{1-r}\int_r^1\o(1-s)\,ds\\
&\leq C'\int_r^1\frac{\o(1-s)}{1-s}\,ds\\
&= C'\int_0^{1-r}\frac{\o(u)}{u}\,du\\
&\leq C''\o(1-r).
\end{align*}
From this and (\ref{E2}) we have
\begin{align*}
(1-r)M_p(r, f')& \leq C\o(1-r)+C'(1-r)+C''\o(1-r)\\
&\leq C'''\o(1-r)+C'(1-r),
\end{align*}
so that
$$
M_p(r, f')\leq C\frac{\o(1-r)}{1-r}+C', \quad \text{as}\,\,r\to 1^{-}.
$$
Taking into account (\ref{Dini+bp}) we find $M_p(r, f')\leq C''\frac{\o(1-r)}{1-r}$ as $r\to 1^{-}$ and this finishes the proof.
\end{proof}

\section{Bergman spaces and the Dirichlet  space.}

\textbf{Bergman spaces}. Let $dm(z)=\frac{1}{\pi}rd\theta dr$ be  the normalized Lebesgue area   measure on $\D$.
Recall that for $1\leq p<\infty$ the Bergman space $A^p$ consists of the   analytic functions $f:\D\to \C$ such that
$$
\n{f}_{A^p}^p=\int_{\D}|f(z)|^p\,dm(z)= 2\int_0^1M_p^p(u, f)u\,du <\infty.
$$

For $1\leq p<\infty$  $A^p$ are  Banach spaces and $H^p\subset A^p$, but in contrast to Hardy spaces  $A^p$ contains functions which do not   have boundary  radial limits.  Observe however that  if  $g$ belongs to a Hardy space  $H^p$ with boundary function  $g^{*}$ then
$$
\n{\tau_t(g^{*})-g^{*}}_p=\n{r_t(g)-g}_p.
$$
 We may therefore use  the quantity $\n{r_t(f)-f}_{A^p}$ as a substitute for measuring \lq\lq smoothness\rq\rq of functions in $A^p$.

Let $f$ be analytic on $\D$,  $1\leq p<\infty$ and $0\leq r<1$. Then the  dilations  $f_r(z)=f(rz)$ belong to $A^p$, and we define the quantity
$$
 A_p(r, f):=\n{f_r}_{A^p}= \left(\int_{\D}|f(rz)|^p\,dm(z)\right)^{1/p}.
$$
Note that $A_p(r, f)$ is an area integral mean, in   much the same way as  $M_p(r, f)=\n{f_r}_{H^p}$ is arc-length integral mean
\begin{align*}
A_p(r, f)&=\left(\frac{2}{r^2}\int_0^rM_p^p(u, f)u\,du\right)^{1/p}\\
&=\left(\frac{1}{m(r\D)}\int_{r\D}|f(z)|^p\,dm(z)\right)^{1/p}.
\end{align*}
Further,  since $M_p(r,f)$ is increasing in $r$ it follows that $A_p(r, f)$ is also an increasing function of $r$, and if $f\in A^p$ we have
$$
A_p(r, f)=\n{f_r}_{A^p}\to \n{f}_{A^p}, \quad r\to 1^{-},
$$
while if $f$ is not in $A^p$ then $A_p(r, f)$ increases to infinity. An application of the Lebesgue dominated convergence theorem shows that for $f\in A^p$,
$$
\n{f_r-f}_{A^p}\to 0, \quad r\to 1^{-}.
$$

Such  area integral means   were studied in the  more general context of volume integral means by J. Xiao and K. Zhu in \cite{XZ}, where  the authors prove basic properties of such averages for   weighted volume measures
$dv_a(z)=(1-|z|^2)^a dv(z)$ on the unit ball of $\C^n$.

We show now  that functions $f\in A^p$ for which $\n{r_t(f)-f}_{A^p}=\og(|t|^{\a})$  as $t\to 0$ can be characterized by the growth of $A_p(r, f')$ as well as by the limiting behavior of $\n{f_r-f}_{A^p}$.

\begin{theorem}\label{theorem3}
 Let $1\leq p<\infty$, $0<\a\leq 1$ and $f\in A^p$. Then the following are equivalent
\begin{itemize}
\item[(a)] $\n{r_t(f)-f}_{A^p}=\og(|t|^{\a}), \quad t\to 0$,
\item[(b)] $A_p(r, f')  =\og\left (1-r)^{\a-1}\right ), \quad r\to 1^{-}$,
\item[(c)] $\n{f_r-f}_{A^p}=\og((1-r)^{\a}),\quad  r\to 1^{-}$.
\end{itemize}
\end{theorem}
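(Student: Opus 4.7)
The plan is to transcribe the elementary proof of Theorem C to the Bergman setting for the equivalence $(b)\Leftrightarrow(c)$, then close the cycle with $(a)\Rightarrow(c)$ (via a Poisson average of rotations) and $(b)\Rightarrow(a)$ (via inserting a suitable dilation). The only change needed to the proof of Theorem C is to replace $M_p(u,\cdot)$, $\n{\cdot}_{H^p}$, and the Hardy auxiliary inequality $M_p(u, F')\leq C\n{F}_{H^p}/(1-u)$ by $A_p(u,\cdot)$, $\n{\cdot}_{A^p}$, and the Bergman analogue $A_p(u, F')\leq C\n{F}_{A^p}/(1-u)$, the latter valid for $F\in A^p$ and $u\in[\tfrac12,1)$. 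The Bergman estimate follows from the classical Littlewood--Paley characterization $\int_{\D}|F'(z)|^p(1-|z|)^p\,dm(z)\leq C\n{F}_{A^p}^p$ together with the trivial inequality $(1-|w|)^p\geq(1-u)^p$ on $\{|w|<u\}$.

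For $(b)\Rightarrow(c)$, the $A^p$-norm of (\ref{Ef1}) combined with Minkowski and rotation invariance gives $\n{f_r-f}_{A^p}\leq\int_r^1 A_p(s,f')\,ds=O((1-r)^{\alpha})$ under (b). For the converse $(c)\Rightarrow(b)$, applying (\ref{Ef2}) at $z\mapsto rz$ and taking $A^p$-norms yields
$$(1-r)\,A_p(r,f')\leq A_p(r,(f-f_r)/z)+\int_r^1 A_p(r,\Phi_{[s]}')\,ds.$$
The Bergman auxiliary estimate plus $\n{\Phi_{[s]}}_{A^p}\leq\n{f_s-f}_{A^p}+\tfrac{1-s}{s}\n{f}_{A^p}=O((1-s)^{\alpha})$ (under (c)) controls the integral term by $O((1-r)^{\alpha})$. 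The boundary term is handled by splitting $\D=\{|z|\leq\tfrac12\}\cup\{|z|>\tfrac12\}$ and using the Bergman pointwise estimate $|f'(z)|\leq C\n{f}_{A^p}$ on the inner part, giving $\n{(f-f_r)/z}_{A^p}\leq 2\n{f-f_r}_{A^p}+C(1-r)\n{f}_{A^p}=O((1-r)^{\alpha})$. Combining produces $A_p(r,f')=O((1-r)^{\alpha-1})$.

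For $(a)\Rightarrow(c)$, the Poisson representation $f_r=\tfrac{1}{2\pi}\int_{-\pi}^{\pi}P_r(t)\,r_{-t}(f)\,dt$ (verified by Fourier series), combined with $\tfrac{1}{2\pi}\int P_r(t)\,dt=1$, yields $f_r-f=\tfrac{1}{2\pi}\int P_r(t)(r_{-t}(f)-f)\,dt$, and Minkowski with (a) gives $\n{f_r-f}_{A^p}\leq C\int_{-\pi}^{\pi}P_r(t)|t|^{\alpha}\,dt=O((1-r)^{\alpha})$ for $0<\alpha<1$, by the classical Poisson estimate. The endpoint $\alpha=1$ needs a separate argument since Poisson only yields $O((1-r)\log(1/(1-r)))$: (a) at $\alpha=1$ implies $f'\in A^p$ by Fatou's lemma applied to the pointwise limit $(r_tf-f)/(it)\to zf'$ (giving $\n{zf'}_{A^p}\leq C$ and hence $\n{f'}_{A^p}<\infty$), after which $\n{f_r-f}_{A^p}\leq(1-r)\n{f'}_{A^p}=O(1-r)$. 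For $(b)\Rightarrow(a)$, setting $\rho=1-|t|$, I decompose $r_t(f)-f=r_t(f-f_\rho)+(r_t(f_\rho)-f_\rho)+(f_\rho-f)$. Rotation invariance gives $\n{r_t(f-f_\rho)}_{A^p}=\n{f-f_\rho}_{A^p}=O(|t|^{\alpha})$ via the already-proved $(b)\Rightarrow(c)$. Since $f_\rho$ is analytic on a disc of radius $1/\rho>1$, $(f_\rho)'\in A^p$, and integrating $r_t(g)-g=\int_0^t ize^{is}g'(e^{is}z)\,ds$ in $A^p$-norm yields the elementary estimate $\n{r_t(g)-g}_{A^p}\leq|t|\n{g'}_{A^p}$; applied to $g=f_\rho$ this gives $\n{r_t(f_\rho)-f_\rho}_{A^p}\leq|t|\rho A_p(\rho,f')=O(|t|^{\alpha})$ under (b).

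The main obstacle is the Bergman auxiliary estimate $A_p(u,F')\leq C\n{F}_{A^p}/(1-u)$ used in Step~2: the naive Cauchy argument using $|F'(z)|\leq C\n{F}_{A^p}(1-|z|)^{-1-2/p}$ only yields the weaker $A_p(u,F')\leq C\n{F}_{A^p}/(1-u)^{1+1/p}$, which is insufficient to close $(c)\Rightarrow(b)$, so one must invoke the Littlewood--Paley characterization to bridge the gap. A secondary subtlety is the endpoint $\alpha=1$ in $(a)\Rightarrow(c)$, where Poisson alone produces a spurious logarithm and a Fatou-type derivative argument is needed.
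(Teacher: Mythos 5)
Your proposal is correct, and most of it runs parallel to the paper's own argument: the equivalence $(b)\Leftrightarrow(c)$ is obtained exactly as in the paper from the identities (\ref{Ef1}) and (\ref{Ef2}) together with the inequality $A_p(u,F')\leq C\n{F}_{A^p}/(1-u)$, which the paper likewise derives from the norm equivalence $\n{F}_{A^p}^p\asymp |F(0)|^p+\int_{\D}|F'(z)|^p(1-|z|^2)^p\,dm(z)$ after observing, as you do, that the naive pointwise Cauchy estimate is too lossy; and your $(b)\Rightarrow(a)$ is the same telescoping $r_t(f)-f=r_t(f-f_\rho)+(r_t(f_\rho)-f_\rho)+(f_\rho-f)$ with $\rho=1-|t|$ that the paper writes as a three-piece contour integral, with identical estimates for each piece. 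The one genuinely different leg is the return from $(a)$: you prove $(a)\Rightarrow(c)$ by averaging rotations against the Poisson kernel, $f_r-f=\frac{1}{2\pi}\int_{-\pi}^{\pi} P_r(t)\,(r_{-t}(f)-f)\,dt$, whereas the paper proves $(a)\Rightarrow(b)$ by representing $f'(uz)$ through the Cauchy formula with kernel $e^{it}/(e^{it}-u)^2$. The two computations are cousins --- the kernels differ by the factor $1-u^2$, which is exactly the gap between the respective targets $(1-u)^{\a}$ and $(1-u)^{\a-1}$ --- and both degenerate logarithmically at $\a=1$, a defect that you and the paper repair by the identical Fatou argument showing $zf'\in A^p$. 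Your route has the minor advantages of not needing the lemma $\n{g(z)/z}_{A^p}\leq C_p\n{g}_{A^p}$ from Zhu (which you also circumvent in the $(c)\Rightarrow(b)$ step by splitting $\D$ at $|z|=1/2$) and of quoting only the standard Poisson estimate $\int P_r(t)|t|^{\a}\,dt=\og((1-r)^{\a})$; the cost is that the implication is routed through $(c)$ rather than $(b)$, but the cycle $(a)\Rightarrow(c)\Rightarrow(b)\Rightarrow(a)$ closes and the theorem follows.
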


\begin{proof}  We first show that (a) and (b) are equivalent. To show (b) implies (a) we
 follow the argument in  \cite[Theorem 5.4]{Du} adapted for area integrals. Note that if (b) holds for an analytic function $f$ then $f\in A^p$.
 Indeed assuming without loss of generality that $f(0)=0$ we have
 $$
 f(z)=\int_0^zf'(\zeta)\,d\zeta=\int_0^1f'(tz)z\,dt,
 $$
and an application of Minkowski's inequality gives
\begin{align*}
\left(\int_{\D}|f(z)|^p\,dm(z)\right)^{1/p}&=\left(\int_{\D}\left|\int_0^1f'(tz)z\,dt\right|^p\,dm(z)\right)^{1/p}\\
&\leq \int_0^1\left(\int_{\D}|f'(tz)|^p\,dm(z)\right)^{1/p}\,dt\\
&=\int_0^1A_p(t, f')\,dt\\
&\leq C\int_0^1(1-t)^{\a-1}\,dt\\
&=\frac{C}{\a}.
\end{align*}

Suppose $f$ is analytic on $\D$ and satisfies (b). Let $z\in \D$,  $0<\d<1$ and $t>0$, then
\begin{align*}
f(e^{it}z)-f(z)&=\int_z^{\d z}f'(\zeta)\, d\zeta+\int_{\d z}^{\d e^{it}z}f'(\zeta)\, d\zeta+\int_{\d e^{it}z}^{ e^{it}z }f'(\zeta)\, d\zeta\\
&= -\int_{\d}^1 f'(sz)z\,ds+\int_0^tf'(\d e^{is}z)\d ze^{is}i\,ds+\int_{\d}^1f'(se^{it}z)e^{it}z\,ds\\
&=f_1(z)+f_2(z)+f_3(z).
\end{align*}
Each  function $f_i(z)$ is analytic on $\D$.  We estimate the Bergman norm of each by applying
Minkowski's inequality and by using the fact that compositions of Bergman functions with rotations leave their norm invariant. For $f_1$  we have
\begin{align*}
\n{f_1}_{A^p}&=\left(\int_{\D}\left|-\int_{\d}^1f'(sz)z\,ds\right|^p\,dm(z)\right)^{1/p}\\
&\leq \int_{\d}^1\left(\int_{\D}|f'(sz)z|^p\,dm(z)\right)^{1/p}\,ds\\
&\leq \int_{\d}^1A_p(s, f')\,ds,
\end{align*}
and similarly we find
$$
\n{f_3}_{A^p}\leq \int_{\d}^1A_p(s, f')\,ds, \quad \,\,\,\,\n{f_2}_{A^p}\leq \int_0^tA_p(\d, f')\,ds=tA_p(\d, f').
$$
 Thus
$$
\n{r_t(f)-f}_{A^p} \leq \n{f_1}_{A^p}+\n{f_2}_{A^p}+\n{f_3}_{A^p}
\leq 2\int_{\d}^1A_p(s, f')\,ds+tA_p(\d, f'),
$$
and since  by hypothesis $A_p(s, f')\leq C(1-s)^{\a-1}$,  we find
$$
\n{r_t(f)-f}_{A^p} \leq \frac{2C}{\a}(1-\d)^{\a}+Ct(1-\d)^{\a-1}.
$$
This holds with $\d$ and $t$ independent of each other. Thus given $t$ with $0<t<1$ choose $\d=1-t$. The result is
$$
\n{r_t(f)-f}_{A^p} \leq C(\frac{2}{\a}+1)t^{\a},
$$
with the constant independent of $t>0$.

Finally if  $t<0$   then  $\n{r_t(f)-f}_{A^p}=\n{f-r_{-t}(f)}_{A^p}$ with $-t>0$ and the assertion follows. This completes the proof
of the direction (b) $\Rightarrow$ (a).

We now show that (a) implies (b). Suppose $f\in A^p$ satisfies (a). Let $0<u<1$ and use the Cauchy formula to write $f'(uz)$ as line integral over the contour
$\gamma(t)=ze^{it}, -\pi\leq t\leq \pi$,
\begin{align*}
f'(uz)&=\frac{1}{2\pi i}\int_{\gamma}\frac{f(\zeta)-f(z)}{(\zeta-uz)^2}\,d\zeta
=\frac{1}{2\pi}\int_{-\pi}^{\pi}\frac{f(ze^{it})-f(z)}{(ze^{it}-uz)^2}ze^{it}\,dt\\
&=\frac{1}{2\pi}\int_{-\pi}^{\pi}\frac{f(ze^{it})-f(z)}{z}\frac{e^{it}}{(e^{it}-u)^2}\,dt.
\end{align*}
From  Minkowski's inequality  we have
\begin{align*}
A_p(u, f')&=\left(\int_{\D}|f'(uz)|^p\,dm(z)\right)^{1/p}\\
&=\left(\int_{\D}\left| \frac{1}{2\pi}\int_{-\pi}^{\pi}\frac{f(ze^{it})-f(z)}{z}\frac{e^{it}}{(e^{it}-u)^2}\,dt\right|^p\,dm(z)\right)^{1/p}\\
&\leq \frac{1}{2\pi}\int_{-\pi}^{\pi}\left(\int_{\D}\left|\frac{f(ze^{it})-f(z)}{z}\right|^p\,dm(z)
\right)^{1/p}\frac{1}{|e^{it}-u|^2}\,dt\\
&=\frac{1}{2\pi}\int_{-\pi}^{\pi} \n{\frac{f(ze^{it})-f(z)}{z}}_{A^p}
\frac{1}{|e^{it}-u|^2}\,dt.
\end{align*}
At this point we need the fact  that if $g\in A^p$ and $g(0)=0$ then there is a constant $C=C_p$
such that
$$
\n{\frac{g(z)}{z}}_{A^p}\leq C_p\n{g}_{A^p},
$$
see \cite[Lemma 4.26]{Zhu}. Using this and the hypothesis (a) we have
\begin{align*}
A_p(u, f')&\leq \frac{C_p}{2\pi}\int_{-\pi}^{\pi} \n{f(ze^{it})-f(z)}_{A^p}
\frac{1}{|e^{it}-u|^2}\,dt\\
&\leq \frac{C_pC}{2\pi}\int_{-\pi}^{\pi}\frac{|t|^{\a}}{|e^{it}-u|^2}\,dt\\
&=\frac{C_pC}{\pi}\int_0^{\pi}\frac{t^{\a}}{1-2u\cos(t)+u^2}\,dt.
\end{align*}
Now use the standard inequality
$$
1-2u\cos(t)+u^2=(1-u)^2+4u\sin^2(\frac{t}{2})\geq (1-u)^2+\frac{4ut^2}{\pi^2}
$$
which is valid for $0\leq t\leq \pi$. Making the change of variable $s=\frac{2\sqrt{u}}{(1-u)\pi}t$  and assuming $u\geq 1/4$ we obtain
\begin{align*}
\int_0^{\pi}\frac{t^{\a}}{1-2u\cos(t)+u^2}\,dt&\leq\int_0^{\pi}\frac{t^{\a}}{(1-u)^2+\frac{4ut^2}{\pi^2}}\,dt
\leq C_{\a}(1-u)^{\a-1},
\end{align*}
 where $C_{\a}= \pi^{\a+1}\int_0^{\infty}\frac{s^{\a}}{1+s^2}\,ds$ is a  constant, finite for $0<\a<1$, which does not depend on $u\in (1/4, 1)$. Thus if
 $0<\a<1$ then  $A_p(u, f')=\og((1-u)^{\a-1})$ as  $u\to 1^{-}$, the desired conclusion.

 It remains to treat the case $\a=1$. Here we have to prove that if $f\in A^p$ and $\n{r_t(f)-f}_{A^p}\leq C t$
as $t\to 0$ then $f'\in A^p$. To do this let $z\in\D$ and observe that
$$
\lim_{t\to 0}\frac{f(e^{it}z)-f(z)}{t}=  z\lim_{t\to 0}\frac{f(e^{it}z)-f(z)}{e^{it}z-z}\frac{e^{it}-1}{t}=                     izf'(z),
$$
i.e. $\frac{f(e^{it}z)-f(z)}{t}$ converges pointwise to $izf'(z)$ on $\D$ as $t\to 0$. By the assumption we also have
$\n{\frac{r_t(f)-f}{t}}_{A^p}\leq C$. Thus by Fatou's Lemma,
\begin{align*}
\int_{\D}|zf'(z)|^p\,dm(z)&=\int_{\D}\liminf_{t\to 0}\left|\frac{f(e^{it}z)-f(z)}{t}\right|^p\,dm(z)\\
&\leq \liminf_{t\to 0}\int_{\D}\left|\frac{f(e^{it}z)-f(z)}{t}\right|^p\,dm(z)\\
&\leq C^p,
\end{align*}
so that $zf'(z)$ and therefore $f'(z)$ are in $A^p$.

Next we show that (b) and (c) are equivalent.
 Assume $f$ is analytic  on $\D$ and satisfies (b). Fix $r\in (0,1)$ and take integral means on $|z|=u\in (0, 1)$ in the equation (\ref{Ef1}) to obtain
$$
M_p(u, f_r-f) \leq u\int_r^1 M_p(su, f')\,ds\leq \int_r^1 M_p(su, f')\,ds .
$$
Recalling that  (b) implies that $f\in A^p$ and we find
\begin{align*}
\n{f_r-f}_{A^p} &= \left(2\int_0^1M_p^p(u, f_r-f)u\,du\right)^{1/p}\\
&\leq \left( 2\int_0^1\left(\int_r^1M_p(su, f')\,ds\right)^p u\,du\right)^{1/p}\\
&\leq \int_r^1\left(2\int_0^1M_p^p(su, f')u\,du\right)^{1/p}\,ds\\
&=\int_r^1\left(\frac{2}{s^2}\int_0^sM_p^p(v, f')v\,dv\right)^{1/p}\,ds\\
&=\int_r^1A_p(s, f')\,ds.
\end{align*}
Since   $A_p(s, f')\leq C(1-s)^{\a-1}$  the  integration gives
$\n{f_r-f}_{A^p}\leq C'(1-r)^{\a}$ so (c) holds.

Finally we prove (c) implies (b). Observe first that if $f, g$ are analytic on $\D$ then since  $\n{(f+g)_r}_{A^p}\leq \n{f_r}_{A^p}+\n{g_r}_{A^p}$ we will have
$$
A_p(r, f+g)\leq A_p(r, f)+A_p(r, g)
$$
for each $0\leq r<1$.

Assume $f\in A^p$ and fix $r\in (0,1)$. Taking area integral means in both sides of  (\ref{Ef2}) we find that for each $0<u<1$
\begin{equation}\label{Be1}
(1-r)A_p(u, f')\leq A_p(u, F_{[r]}) +A_p(u, \Psi_{[r,s]}),
\end{equation}
where $F_{[r]}(z)=\frac{f(z)-f_r(z)}{z}$ and
$$
\Psi_{[r,s]}(z)=\int_r^1(f'(z)-f'(sz))ds.
$$
For  the term $A_p(u, F_{[r]})$ in (\ref{Be1}) we obtain
\begin{equation}\label{Be2}
A_p(u, F_{[r]})=\n{(F_{[r]})_u}_{A^p}\leq \n{F_{[r]}}_{A^p}\leq C_p\n{f-f_r}_{A^p}
\end{equation}
for each $0<u<1$.
Now we look at the the term  $A_p(u, \Psi_{[r,s]})$ and use Minkowski's inequality,
\begin{align*}
A_p(u, \Psi_{[r,s]})&= \n{\Psi_{[r,s]}(uz)}_{A^p}\\
&= \left(\int_0^1\int_0^{2\pi}\left|\int_r^1(f'(vue^{i\theta})-f'(svue^{i\theta}))\,ds\right|^p\, \frac{d\theta}{\pi}v\,dv\right)^{1/p}\\
&\leq \left(\int_0^1\left(\int_r^1 2^{1/p}M_p(uv, \Phi_{[s]}')\,ds\right)^pv\, dv\right)^{1/p}\\
\intertext{where $\Phi_{[s]}(z)=f(z)-\frac{1}{s}f_s(z)$ for $r\leq s<1$, and further }
&\leq \int_r^1 \left(2\int_0^1M_p^p(uv, \Phi_{[s]}')v\,dv\right)^{1/p}\,ds\\
&= \int_r^1\left(2\int_0^1M_p^p(v, (\Phi_{[s]}')_u)v\,dv\right)^{1/p}\,ds\\
&=\int_r^1A_p(u, \Phi_{[s]}')\,ds.
\end{align*}
At this point we need the  inequality
\begin{equation}\label{Be3}
A_p(u, F')\leq \frac{C\n{F}_{A^p}}{1-u}, \quad 0\leq u<1,
\end{equation}
for $F\in A^p$, where $C$ is a constant independent of $F$. A proof of this is as follows.
For  $F\in A^p$ the  Bergman  norm of $\n{F}_{A^p}^p$ is equivalent to the quantity
$$
|F(0)|^p+ \int_{\D}|F'(z)|^p(1-|z|^2)^p\,dm(z),
$$
see \cite[page 85]{Zhu}. Thus there is a constant $C$ such that for $0<u<1$,
\begin{align*}
\n{F}_{A^p}^p&\geq C\int_{\D}|F'(z)|^p(1-|z|^2)^p\,dm(z)\\
&\geq C\int_{u\D}|F'(z)|^p(1-|z|^2)^p\,dm(z)\\
&\geq C(1-u^2)^p\int_{u\D}|F'(z)|^p\,dm(z).
\end{align*}
Then for $1/2<u<1$,
$$
A_p(u, F')=\left(\frac{1}{u^2}\int_{u\D}|F'(z)|^p\,dm(z)\right)^{1/p}\leq \frac{C'}{1-u}\n{F}_{A^p}.
$$
On the other hand there is a constant $C''$ such that
$$
|F'(z)|\leq C''\n{F}_{A^p},
$$
for  $F\in A^p,$ and $|z|\leq 1/2$,
\cite[page 99, exer. 24] {Zhu}. Therefore $A_p(u, F')\leq C''\n{F}_{A^p}$ when $0\leq u\leq 1/2$.
Choosing $C=\max\{C', C''\}$ gives (\ref{Be3}).

Using (\ref{Be3}) and observing that if $f\in A^p$ then $\Phi_{[s]}\in A^p$ we have
\begin{align*}
\frac{1}{C}(1-u)A_p(u, \Phi_{[s]}')&\leq\n{\Phi_{[s]}}_{A^p}=\n{f-\frac{1}{s}f_s}_{A^p}\\
&\leq \n{f-f_s}_{A^p}+\n{f_s-\frac{1}{s}f_s}_{A^p}\\
& =\n{f_s-f}_{A^p}+\frac{1-s}{s}\n{f_s}_{A^p}\\
&\leq \frac{1}{r}(1-s)\n{f}_{A^p}+\n{f_s-f}_{A^p}
\end{align*}
for $r\leq s<1$ and  $0\leq u<1$. The proof can now be finished as in the case of Hardy spaces. Namely choose $u=r$ and integrate to obtain
$$
\int_r^1A_p(r, \Phi_{[s]}')\,ds\leq \frac{C\n{f}_{A^p}}{2r}(1-r)+ \frac{C}{1-r}\int_r^1\n{f_s-f}_{A^p}\,ds.
$$
Then use  (\ref{Be1}) with $u=r$  and  (\ref{Be2})  to  find
\begin{equation}\label{Be4}
\begin{aligned}
(1-r)A_p(r, f')&\leq C_p\n{f_r-f}_{A^p}+\frac{C\n{f}_{A^p}}{2r}(1-r)\\
&+ \frac{C}{1-r}\int_r^1\n{f_s-f}_{A^p}\,ds.
\end{aligned}
\end{equation}
Using the assumption   $\n{f_r-f}_{A^p}\leq C'(1-r)^{\a}$ we have
$$
(1-r)A_p(r, f')\leq C_p' (1-r)^{\a}+\frac{C\n{f}_{A^p}}{2r}(1-r)+\frac{C C'}{\a+1}(1-r)^{\a}
$$
therefore
$$
A_p(r, f')=\og((1-r)^{\a-1}), \quad r\to 1^{-},
$$
and this completes the proof.
\end{proof}

\textbf{The Dirichlet space}.  We next show that the analogue of theorem (\ref{theorem3}) in valid in the Dirichlet space $\mathcal{D}$.  This is in fact a corollary of theorem (\ref{theorem3}) so we will be brief.
Recall that $\mathcal{D}$ contains those analytic  $f$ such that $f'\in A^2$. It is a Hilbert space with the norm
$$
\n{f}_{\mathcal{D}}= (|f(0)|^2+\n{f'}_{A^2})^{1/2}.
$$

For $f\in \mathcal{D}$ the dilations $f_r$, $r<1$,  are in $\mathcal{D}$. We define the quantity
$$
D(r, f)=\n{f_r}_{\mathcal{D}}, \quad 0<r<1.
$$
Further for  $w\in \overline{\D}$  write  $f_w(z)=f(wz)$. Using the triangle inequality we obtain
$$
\n{f_w-f}_{\mathcal{D}}\leq |1-w|\n{f}_{\mathcal{D}}+\n{(f')_w-f'}_{A^2},
$$
and
$$
\n{(f')_w-f'}_{A^2}\leq \frac{|1-w|}{|w|}\n{f}_{\mathcal{D}}+\n{f_w-f}_{\mathcal{D}}.
$$
In particular letting $w=e^{it}$ we see that for $0<\a\leq 1$, each of the conditions     $\n{r_t(f)-f}_{\mathcal{D}}=\og(|t|^{\a})$ and  $\n{r_t(f')-f'}_{A^2}=\og(|t|^{\a})$ implies the other.

Similarly letting $w=r\in (0,1]$ we find that the two conditions $\n{f_r-f}_{\mathcal{D}}=\og((1-r)^{\a}) $ and
$\n{(f')_r-f'}_{A^2}=\og((1-r)^{\a}) $ are equivalent.

Collecting all these observations we obtain as a corollary of Theorem (\ref{theorem3}) the following analogue on  $\mathcal{D}$.

\begin{theorem}
 Let $0<\a\leq 1$ and $f\in \mathcal{D}$. Then the following are equivalent
\begin{itemize}
\item[(a)] $\n{r_t(f)-f}_{\mathcal{D}}=\og(|t|^{\a}), \quad t\to 0$,
\item[(b)] $D(r, f')  =\og\left ((1-r)^{\a-1}\right ), \quad r\to 1^{-}$,
\item[(c)] $\n{f_r-f}_{\mathcal{D}}=\og((1-r)^{\a}), \quad r\to 1^{-}$.
\end{itemize}
\end{theorem}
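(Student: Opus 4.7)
The plan is to deduce the theorem as a direct corollary of Theorem \ref{theorem3} applied to $g := f'$, which lies in $A^2$ by the definition of $\mathcal{D}$. Theorem \ref{theorem3} with $p=2$ then yields the equivalence of
\begin{itemize}
\item[(a$'$)] $\n{r_t(g)-g}_{A^2}=\og(|t|^{\a})$ as $t\to 0$,
\item[(b$'$)] $A_2(r, g')=\og((1-r)^{\a-1})$ as $r\to 1^{-}$,
\item[(c$'$)] $\n{g_r-g}_{A^2}=\og((1-r)^{\a})$ as $r\to 1^{-}$,
\end{itemize}
so the only remaining work is to identify each of (a$'$), (b$'$), (c$'$) with the corresponding condition (a), (b), (c) of the present theorem.

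For (a) $\iff$ (a$'$) and (c) $\iff$ (c$'$), I would simply invoke the two triangle-inequality estimates $\n{f_w-f}_{\mathcal{D}}\leq |1-w|\n{f}_{\mathcal{D}}+\n{(f')_w-f'}_{A^2}$ and $\n{(f')_w-f'}_{A^2}\leq \frac{|1-w|}{|w|}\n{f}_{\mathcal{D}}+\n{f_w-f}_{\mathcal{D}}$ recorded in the preamble. Specializing to $w=e^{it}$ and using $|1-e^{it}|\leq |t|=\og(|t|^{\a})$ for $0<\a\leq 1$ gives (a) $\iff$ (a$'$); specializing to $w=r\in(0,1)$ and noting $|1-r|=1-r=\og((1-r)^{\a})$ gives (c) $\iff$ (c$'$). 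These are precisely the passages the excerpt has already made explicit.

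The remaining equivalence (b) $\iff$ (b$'$) is a short unpacking of the Dirichlet norm. Since $((f')_r)'(z)=r f''(rz)$, the definition yields
$$D(r,f')^2 \;=\; |f'(0)|^2 + r^2\, A_2(r, f'')^2 \;=\; |f'(0)|^2 + r^2\, A_2(r, g')^2.$$
For $0<\a\leq 1$, the quantity $(1-r)^{\a-1}$ is bounded below by $1$ on $[0,1)$ and (when $\a<1$) tends to $\infty$ as $r\to 1^{-}$; consequently the fixed constant $|f'(0)|$ and the bounded factor $r^2\to 1$ are harmless for the $\og$-estimate, so $D(r,f')=\og((1-r)^{\a-1})$ if and only if $A_2(r, g')=\og((1-r)^{\a-1})$. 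I anticipate no genuine obstacle: everything has essentially been prepared in the paragraphs preceding the theorem, and the only point requiring care is the absorption of the $|f'(0)|$ and $r^2$ factors into the (non-decaying) rate $(1-r)^{\a-1}$.
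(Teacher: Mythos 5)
Your proposal is correct and follows essentially the same route as the paper: the theorem is deduced from Theorem \ref{theorem3} applied to $g=f'\in A^2$, with (a) and (c) transferred via the two triangle-inequality estimates stated in the paper's preamble. The only addition is that you make explicit the identification $D(r,f')^2=|f'(0)|^2+r^2A_2(r,f'')^2$ and the absorption of the constant term into the non-decaying rate $(1-r)^{\a-1}$, which the paper leaves implicit; this step is carried out correctly.
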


\section{Concluding Remarks}

The classical holomorphic Lipschitz spaces $\Lambda_{\a}(\D)$, $0<\a\leq 1$,  are defined to contain all analytic functions on $\D$ such that
$$
|f(z)-f(w)|\leq C|z-w|^{\a}, \quad z,w\in D.
$$
Such functions are continuous on the close disc so  $\Lambda_{\a}(\D)$ are contained in the  disc algebra $\mathcal{A}$. For these spaces it was proved already by Hardy and Littlewood that $ f\in \Lambda_{\a}(\D)$ if and only if
$$
M_{\infty}(r, f')=\og((1-r)^{\a-1}), \,\,r\to 1^{-},
$$
\cite[Theorem 5.1]{Du}.
If we
use the identities (\ref{Ef1}) and (\ref{Ef2}) and take the $\infty$-means $M_{\infty}(r, f)$ instead of the $p$-means
$M_p(r,f)$ we can obtain the analogue of Theorem C for the disc algebra, i.e. if $0<\a\leq 1$ and  $f$ is analytic on $\D$ and continuous on $\overline{\D}$ then the following conditions are equivalent
\begin{itemize}
\item[(i)] $M_{\infty}(r, f')=\og((1-r)^{\a-1}), \,\,r\to 1^{-} $
\item[(ii)] $\n{f_r-f}_{\infty}=\og((1-r)^{\a}), \,\,r\to 1^{-} $
\end{itemize}
This result is already known  in a much more general setting, see for example  \cite{Pa1} for a detailed description.

Notice that all available relevant results can be included in a single statement as follows:
Let $X$ denote any of the Banach spaces $H^p$,   $A^p$,   $\mathcal{D}$ or  $\mathcal{A}$,  and  $\n{\,\,}_X$ be the corresponding norm. Then if  $0<\a\leq 1$ and $f\in X$  the following are equivalent
\begin{itemize}
\item[(a)] $\n{r_t(f)-f}_{X}=\og(|t|^{\a}),\quad t\to 0$,
\item[(b)] $\n{(f')_r}_X  =\og\left (1-r)^{\a-1}\right ), \quad r\to 1^{-}$,
\item[(c)] $\n{f_r-f}_X=\og((1-r)^{\a}), \quad r\to 1^{-}$.
\end{itemize}

\bibliographystyle{amsalpha}

\end{document}